\newcommand\wLt{\ensuremath{\textrm{wL-tight}}}
\newcommand\ccct{\ensuremath{\textrm{ccc-tight}}}
\newcommand\Lt{\ensuremath{\textrm{L-tight}}}
\newcommand\hLt{\ensuremath{\textrm{hL-tight}}}
\newcommand\sCt{\ensuremath{\sigma\textrm{-cmpt-tight}}}
\newcommand\wt{\ensuremath{\omega\textrm{-tight}}}
\newcommand\Ctt{\ensuremath{C_2\textrm{-tight}}}
\newcommand\wNt{\ensuremath{\omega\textrm{N-tight}}}
\newcommand\wDt{\ensuremath{\omega\textrm{D-tight}}}
\newcommand\hLC{\ensuremath{\textrm{hL}}}
\newcommand\sCC{\ensuremath{\sigma\textrm{-cmpt}}}
\newcommand\wC{\ensuremath{\omega}}
\newcommand\CtC{\ensuremath{C_2}}
\newcommand\wNC{\ensuremath{\omega\textrm{N}}}
\newcommand\wDC{\ensuremath{\omega\textrm{D}}}
\newtheorem{theorem}{Theorem}[section]
\newtheorem{corollary}[theorem]{Corollary}
\newtheorem{lemma}[theorem]{Lemma}
\newtheorem{question}[theorem]{Question}
\newtheorem{ex}[theorem]{Example}
\def\arhang{Arhangel'skii}
\def\szent{Szentmikl{\'o}ssy}
\def\con{\subseteq}
\def\CL#1{\overline{#1}}
\newcommand\rationals{\mathbb Q}
\newcommand\reals{\mathbb R}
\def\mathcal#1{\mathscr{#1}}
\newif\iflabels
\begin{document}
\author{Istvan Juh\'asz}

\address{Alfr\'ed R\'enyi Institute of Mathematics, Hungarian Academy of Sciences}
\email{juhasz@renyi.hu}
\author{Jan van Mill}
\def\addressjan{
\address{KdV Institute for Mathematics\\
University of Amsterdam\\
Science Park 105-107\\
P.O. Box 94248\\
1090 GE Amsterdam, The Netherlands}
\email{j.vanMill@uva.nl}
\urladdr{http://staff.fnwi.uva.nl/j.vanmill/}}
\addressjan

\title{Variations on countable tightness}

\date{\today}

\keywords{tightness properties, remote points}

\subjclass[2000]{54A25, 54G20}

\begin{abstract}
We consider 9 natural tightness conditions for topological spaces that are all variations on countable tightnes and investigate the interrelationships between them. Several natural open problems are raised.
\end{abstract}

\maketitle

\section{Introduction}\label{introduction}

A space $X$ has \emph{countable tightness}, or is \emph{countably tight}, if its topology is determined by its countable subsets in the following sense: if $x\in X$ is in the closure of a subset $A$ of $X$ then it is in the closure of some countable subset of $A$. Every sequential space is countably tight and so is every locally countable space.
The statement that countably tight compact Hausdorff spaces are sequential is independent of
the usual axioms of set theory.
We refer to Tkachuk~\cite{Tkachuk:eersteboek} for information about the
importance of the concept of countable tightness in function spaces.

In \arhang\ and Stavrova~\cite{ArhangStavrova93} an interesting variation on the notion of countable tightness was considered. They call a space \emph{$\sigma$-compact tight} if its topology is determined,
in the above sense, by its $\sigma$-compact subspaces.
It was shown in~\cite{ArhangStavrova93} that for compact Hausdorff spaces
$\sigma$-compact tightness is actually equivalent to countable tightness.
Their obvious question whether this in fact holds true for all Tychonoff spaces
turned out to be quite an interesting and difficult problem that remains unsolved.
For some partial results on this problem see Dow and Moore~\cite{DowMoore15}.

Inspired by these results we define and study several tightness conditions here that are in the same spirit. These concepts generalize in an obvious way from the countable to higher cardinals.
But we will not consider them now, we will stick strictly to the countable case.

We shall consider the following properties $\mathcal{P}$ that a subspace of a topological space might have:

\begin{enumerate}
\item[$\omega\ensuremath{\textrm{D}}$] Countable discrete;
\item[$\omega\ensuremath{\textrm{N}}$] Countable and nowhere dense;
\item[$C_2$] Second-countable;
\item[$\omega$] Countable;
\item[hL] Hereditarily Lindel\"of;
\item[$\sigma$-cmpt] $\sigma$-compact;
\item[ccc] The countable chain condition;
\item[L] Lindel\"of;
\item[wL] Weakly Lindel\"of.
\end{enumerate}

\noindent We call a space \emph{$\mathcal{P}$-tight}, if for all $x\in X$ and $A\con X$ such that $x\in \CL{A}$, there exists $B\con A$ such that $x\in\CL{B}$ and $B$ has property $\mathcal{P}$.

It is clear that if property $\mathcal{P}$ implies property $\mathcal{Q}$ then every $\mathcal{P}$-tight space is $\mathcal{Q}$-tight. Assume that the property $\mathcal{P}$ that we are interested in is inherited by dense subspaces. Since every space contains a left-separated dense subspace (Juh\'asz~\cite{juhasz}), it follows that $\mathcal{P}$-tightness and $\mathcal{Q}$-tightness coincide, where $\mathcal{Q}$ is the property of being both $\mathcal{P}$ and left-separated. This observation helps to narrow down the number of properties to consider. For example, missing in our list is the property $\mathcal{S}$
of having countable spread. We left it out because for every space $X$ we have that $X$ is $\mathcal{S}$-tight if and only if $X$ is \hLt. To see this, simply observe that every left-separated space of countable spread is hereditarily Lindel\"of (Juh\'asz~\cite[2.12]{juhasz}).

We also note that, as second countable spaces are separable, the property $\omega C_2$-tight is equivalent to
$C_2$-tight and hence is also left out.

The aim of this note is to investigate the interrelationships of these tightness conditions and to raise some open problems. An analogous study   of variations of $\omega$-boundedness was carried out in Juh\'asz, van Mill and Weiss~\cite{JuhaszMillWeiss11} and Juh\'asz, Soukup and \szent~\cite{JuhaszSoukupSzentmiklossy15}. To our surprise it turned out that the natural concept of Lindel\"of-tightness is mysterious and difficult. It is not known to us, for example, whether every L-tight space is hL-tight, or ccc-tight.

What we do know and do not know about the above tightness conditions is summarized in the following diagram:
$$
\xymatrix@C=18mm@R=9mm{
 & & \wDt\ar[dr]_{{\mathrm{crowded}}}\ar[dl] &
\\
 &\Ctt\ar[dr]\ar@/^1pc/[ur]|-{/}\ar@/^0.5pc/[rr]|-{/} &  & \wNt\ar[dl]\ar[dl]\ar@/_0.75pc/[ul]|-{/}\ar@/^0.5pc/[ll]|-{/}
\\
 & & \wt\ar[dr]\ar[dl]\ar@/_0.75pc/[ur]|-{{/}} \ar@/^0.75pc/[ul]|-{/} &
\\
& \hLt\ar[dr]\ar[dl]\ar@/^0.5pc/[rr]|-{/}\ar@/^0.75pc/[ur]|-{{/}} & &  \sCt\ar[dl]\ar@/_0.75pc/[ul]_{{?}}\ar@/^0.5pc/[ll]^{{?}}
\\
 \ccct\ar[dr]\ar@/^0.5pc/[rr]|-{/}\ar@/^0.75pc/[ur]|-{{/}}  & & \Lt\ar[dl]\ar@/_0.75pc/[ur]|-{{/}}\ar@/^0.5pc/[ul]^{{{?}}}\ar@/^0.5pc/[ll]^{{?}} &
\\
 &\wLt\ar@/_0.75pc/[ur]|-{{/}}\ar@/^0.75pc/[ul]^{{{?}}} & &
\\
}
$$
\noindent {\bf Acknowledgments}. This note derives from the authors’ collaboration
at the R\'enyi Institute in Budapest in the spring of 2016. The
second-listed author is pleased to thank hereby the Hugarian Academy
of Sciences for its generous support in the framework of the distinguished
visiting scientists program of the Academy and the R\'enyi Institute
for providing excellent conditions and generous hospitality. The
first author also thanks the support of the NKFIH grant no. 113047.

\section{Proofs}

Discrete subsets of crowded spaces are nowhere dense. Hence the implication $\wDt \Longrightarrow \wNt$ indeed holds true for crowded spaces. All other implications in the above diagram need no further explanation. So we concentrate on describing counterexamples that demonstrate that certain implications cannot be reversed. But before we do that, we will point out that one of the problems that we were unable to settle is equivalent to the problem of \arhang\ and Stavrova that we discussed in \S\ref{introduction}.

\begin{lemma}\label{eerstelemma}
For any Hausdorff space $X$, the following statements are equivalent:
\begin{enumerate}
\item $X$ is countably tight.
\item $X$ is both $\sCC$-tight and $\hLC$-tight.
\end{enumerate}
\end{lemma}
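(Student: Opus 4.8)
For $(1)\Rightarrow(2)$ I would just invoke the monotonicity remark of the introduction: a countable space is a countable union of singletons, hence $\sigma$-compact, and every subspace of a countable space is countable and therefore Lindel\"of, so a countable space is hereditarily Lindel\"of. Thus the property $\omega$ implies both $\sCC$ and $\hLC$, and hence every countably tight space is both $\sCt$ and $\hLt$.

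For $(2)\Rightarrow(1)$, fix $x\in X$ and $A\con X$ with $x\in\CL A$; the goal is a countable $B\con A$ with $x\in\CL B$, and I may assume $x\notin A$. First I would apply $\hLC$-tightness to get a hereditarily Lindel\"of $C\con A$ with $x\in\CL C$, and then apply $\sCC$-tightness to $C$ to get a $\sigma$-compact $B=\bigcup_{n<\omega}K_n\con C$ with each $K_n$ compact and, after replacing $K_n$ by $K_0\cup\dots\cup K_n$, increasing, such that $x\in\CL B$. Since $B\con C$, the space $B$ (and each $K_n$) is hereditarily Lindel\"of; a compact Hausdorff hereditarily Lindel\"of space is perfectly normal (its open sets are $F_\sigma$), so its points are $G_\delta$, and a $G_\delta$ point of a compact Hausdorff space is a point of first countability. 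Hence each $K_n$ is first countable, in particular of countable tightness, so by \arhang 's theorem $K_n$ admits no free sequence of length $\omega_1$. Finally, as $x\notin B$ and each $K_n$ is closed, $\CL B=K_n\cup\CL{B\setminus K_n}$ forces $x\in\CL{B\setminus K_n}$ for every $n$.

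The main step is then an argument by contradiction: suppose $x\notin\CL D$ for every countable $D\con B$. Imitating the standard construction of long free sequences in compact spaces, I would build by recursion on $\alpha<\omega_1$ points $b_\alpha\in B$ and open sets $U_\alpha$ with $x\notin\CL{U_\alpha}$, $\CL{\{b_\beta:\beta<\alpha\}}\con U_\alpha$, and $b_\gamma\notin\CL{U_\beta}$ for all $\beta\le\gamma$. Such a sequence is free in $B$: the head $\{b_\beta:\beta<\alpha\}$ lies in the open set $U_\alpha\con\operatorname{int}\CL{U_\alpha}$, while the tail $\{b_\gamma:\gamma\ge\alpha\}$ is contained in the open set $X\setminus\CL{U_\alpha}$ and so has closure disjoint from $\operatorname{int}\CL{U_\alpha}$; hence $\CL{\{b_\beta:\beta<\alpha\}}$ and $\CL{\{b_\gamma:\gamma\ge\alpha\}}$ are disjoint. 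By pigeonhole there are an $n$ and an uncountable $E\con\omega_1$ with $\{b_\alpha:\alpha\in E\}\con K_n$, and a subsequence of a free sequence is free, so $\langle b_\alpha:\alpha\in E\rangle$ would be a free sequence of length $\omega_1$ in $K_n$ --- contradicting the previous paragraph. This contradiction finishes the proof.

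The hard part is the feasibility of this recursion, and this is exactly where both hypotheses on $B$ are needed. At stage $\alpha$ one must choose $U_\alpha$ with $\CL{\{b_\beta:\beta<\alpha\}}\con U_\alpha$, $x\notin\CL{U_\alpha}$, and such that $B\setminus\bigcup_{\beta\le\alpha}\CL{U_\beta}$ still has $x$ in its closure --- this is the inductive invariant that provides room to continue and keeps $x\in\CL{B\setminus K_n}$ usable, and it is the delicate point at limit stages. The closed sets $\CL{\{b_\beta:\beta<\alpha\}}$ are closed subspaces of $B$, hence $\sigma$-compact, which is what allows one to separate them cleanly from the point $x$ in the merely Hausdorff space $X$, while hereditary Lindel\"ofness (equivalently, Lindel\"ofness of $B$ and all of its subspaces) governs the bookkeeping and prevents the part of $B$ near $x$ from being exhausted. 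When $X$ is regular this is essentially \arhang 's original argument carried out inside the Lindel\"of --- hence normal --- subspace $B\cup\{x\}$ (which is itself $\sigma$-compact, being $\{x\}$ together with the $K_n$, and still hereditarily Lindel\"of); the merely Hausdorff case requires slightly more care. I expect this recursion to be the only genuine obstacle.
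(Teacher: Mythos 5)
Your proof of $(1)\Rightarrow(2)$ and the first reduction in $(2)\Rightarrow(1)$ are fine: applying \hLC-tightness and then $\sCC$-tightness does produce a $\sigma$-compact, hereditarily Lindel\"of $B\con A$ with $x\in\CL{B}$, and each compact piece $K_n$ of $B$ is indeed first countable, hence has no free sequence of length $\omega_1$. But from there on your argument is only a skeleton, and the step you yourself flag as ``the only genuine obstacle'' --- carrying out the transfinite recursion --- is not a gap that can be filled. The statement your recursion is designed to prove, namely that \emph{every} $\sigma$-compact hereditarily Lindel\"of $B$ with $x\in\CL{B}$ admits a countable $D\con B$ with $x\in\CL{D}$, is not a theorem of ZFC. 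Under $\diamondsuit$ let $K$ be a nowhere separable Suslin continuum; it is compact, hereditarily Lindel\"of and first countable. Put $B=K\times\omega$. Then $B$ is $\sigma$-compact, hereditarily Lindel\"of, ccc, non-pseudocompact and of $\pi$-weight $\omega_1$, so by Dow's theorem (quoted in the paper) it has a remote point $p\in\beta B\setminus B$. Since $K$ is nowhere separable, every countable subset of $B$ is nowhere dense, so $p\in\CL{B}$ while $p$ lies in the closure of no countable subset of $B$. Moreover $B$, being first countable and $\sigma$-compact, contains no free sequence of length $\omega_1$ at all, so the contradiction you aim for is simply unavailable; concretely, the invariant ``$x$ stays in the closure of $B\setminus\bigcup_{\beta\le\alpha}\CL{U_\beta}$'' must break down at some countable stage, and nothing in hereditary Lindel\"ofness prevents this. (Two smaller inaccuracies: in a merely Hausdorff space a $\sigma$-compact set, unlike a compact one, cannot in general be put inside an open set whose closure misses $x$; and the paper's own L-space example already shows that hereditary Lindel\"ofness of $B$ by itself gives no control on the tightness at an outside point, so some ingredient beyond the topology of $B$ is unavoidable.)

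The missing ingredient is the left-separation trick from the introduction, which is the heart of the paper's proof and must be applied \emph{before} the two tightness properties are invoked. The paper first replaces $A$ by a left-separated dense subspace (this keeps $x\in\CL{A}$), then extracts a $\sigma$-compact subset --- whose compact pieces, being left-separated compact Hausdorff spaces, are scattered by the Gerlits--Juh\'asz theorem --- and then an \hLC\ subset of that; since left-separation and scatteredness are hereditary and a scattered hereditarily Lindel\"of space is countable, the final set is countable outright, with no free-sequence machinery needed. Left-separation is exactly what excludes Suslin-continuum-like compact pieces (hereditarily Lindel\"of but uncountable), and it has to be imposed at the start: it passes down to the later subsets, whereas if you tried to insert it after fixing $B$ you would lose $\sigma$-compactness. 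So the fix is not to elaborate your recursion but to switch to the paper's argument.
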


\begin{proof}
We only need to prove that (2) implies (1). To this end,
pick an arbitrary $x\in X$, and let $A\con X$ be such that $x\in \CL{A}$.
We may clearly assume without loss of generality that $A$ is left-separated, $\sigma$-compact and \hLC.
Every compact Hausdorff and
left separated space is scattered by Juh\'asz and Gerlits~\cite[Theorem 1]{GerlitsJuhasz78} and every scattered \hLC-space is countable. Hence $A$ is countable, being the countable union of countable sets.
\end{proof}

\begin{corollary}
The following statements are equivalent:
\begin{enumerate}
\item Every $\sCC$-tight Hausdorff  space is countably tight.
\item Every $\sCC$-tight Hausdorff space is \hLC-tight.
\end{enumerate}
\end{corollary}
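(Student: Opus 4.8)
The plan is to read off both implications directly from Lemma~\ref{eerstelemma}, after recording one elementary fact: every countably tight space is \hLC-tight. Indeed, a countable subspace is hereditarily Lindel\"of, so a countable set witnessing countable tightness at a point $x$ also witnesses \hLC-tightness at $x$; this is just the implication $\wt\Rightarrow\hLt$ already present in the diagram.

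For $(2)\Rightarrow(1)$, I would take an arbitrary $\sCC$-tight Hausdorff space $X$. Hypothesis $(2)$ makes $X$ also \hLC-tight, so $X$ is simultaneously $\sCC$-tight and \hLC-tight, and Lemma~\ref{eerstelemma} then yields that $X$ is countably tight. For $(1)\Rightarrow(2)$, I would again take a $\sCC$-tight Hausdorff space $X$; hypothesis $(1)$ gives that $X$ is countably tight, and then the remark above gives that $X$ is \hLC-tight.

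I do not expect any real obstacle here: the entire substance is contained in Lemma~\ref{eerstelemma}, and the only thing requiring a moment's attention is not to overlook the trivial implication $\wt\Rightarrow\hLt$, which is what closes the loop in $(1)\Rightarrow(2)$.
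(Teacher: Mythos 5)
Your argument is correct and is exactly the reasoning the paper intends (the paper gives no separate proof, treating the corollary as an immediate consequence of Lemma~\ref{eerstelemma}): $(2)\Rightarrow(1)$ is the lemma, and $(1)\Rightarrow(2)$ uses the trivial implication that countably tight spaces are \hLC-tight since countable sets are hereditarily Lindel\"of. Nothing further is needed.
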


\noindent Hence the two open problems in the middle part of our diagram are indeed equivalent.

\bigskip

We now turn to describing our counterexamples exemplifying the non-arrows
in our diagram. They are all Tychonoff.
Formally, our tightness conditions are defined for all topological spaces, no separation axioms are implicitly needed in their definitions. But we want our counterexamples to be nice, so from now on we will assume that all topological spaces are Tychonoff.

Remote points will play an important role in their constructions. A \emph{remote point} of a topological space $X$ is a point $p\in \beta X\setminus X$, where $\beta X$ is the \v{C}ech-Stone compactification of $X$, such that for every nowhere dense subset $A$ of $X$, $p\not\in \mathrm{CL}_{\beta X} A$. Van Douwen~\cite{disseen} proved that all non-pseudocompact spaces of countable $\pi$-weight have remote points, a result that was subsequently generalized by Dow~\cite{Dow84} who proved the same result for the class of all non-pseudocompact ccc-spaces of $\pi$-weight at most $\omega_1$. Not all non-pseudocompact spaces have remote points, as was shown by van Douwen and van Mill~\cite{vm:66}.

\newpage

\begin{ex}\label{not_wD}
{ A crowded \CtC-tight space which is  not \wNC-tight, hence not \wDC-tight either.}
\end{ex}
Fix a remote point $p$ of $\rationals$,
the space of rational numbers, and put $X= \rationals \cup \{p\}$
considered as a subspace of $\beta\rationals$. Then $X$ is clearly as required.

\begin{ex}\label{nodec}
{A crowded countable, hence \wC-tight, space which is neither \wNC-tight, nor \CtC-tight.}
\end{ex}
A space $X$ is called \emph{nodec} if all of its nowhere dense subsets are closed.
This clearly implies that all of its nowhere dense subsets are actually closed and discrete.
Van Douwen constructed a countable and crowded nodec space $X$ in~\cite{maximaltopologies}.
But every second-countable subspace of a crowded nodec space is discrete
because it cannot contain a non-trivial convergent sequence, hence $X$ is the example we are after.
In fact, $X$ is neither \wNC-tight, nor \CtC-tight at any of its points.

\par\medskip

To obtain our next example, we introduce a generalization of the well-known Alexandroff duplicate construction.
Let $X$ be any space and fix a pairwise disjoint collection $\mathcal{Y} = \{Y_x : x\in X\}$ of (nonempty) topological spaces such that $X\,\cap\, \bigcup \mathcal{Y} = \emptyset$ and consider the set $Z(X,\mathcal{Y})= X\,\cup\, \bigcup\mathcal{Y}$. If $x\in A \con X$, then let
$$
    W(x,A) = A \cup \bigcup {\big \{}Y_{x'} : x'\in A\setminus \{x\}{\big \}}.
$$
We topologize $Z(X,\mathcal{Y})$ as follows. For every $x\in X$, $Y_x$ is a clopen subsspace of $Z(X,\mathcal{Y})$ whose relative topology coincides with the original topology on $Y_x$. A basic open
$Z(X,\mathcal{Y})$-neighborhood of $x\in X$ has the form $W(x,U)$, where $U$ is any open neighborhood of $x$ in $X$.
It is obvious that if $X$ is crowded then the set $\bigcup_{x\in X} Y_x$ is dense open in $Z(X,\mathcal{Y})$,
hence $X$ is nowhere dense in $Z(X,\mathcal{Y})$.

It is also easy to check that $Z(X,\mathcal{Y})$ is Tychonoff, provided that $X$ and all the $Y_x$  are.
Its topology is inspired by the Alexandroff duplicate of $X$ where each $x\in X$ corresponds to a specific isolated point, its duplicate. That point is simply "blown up" to the space~$Y_x$.

\begin{lemma}\label{derdelemma}
If $X$ is countably tight, moreover $X$ and all the $Y_x$  are crowded, then the space $Z(X,\mathcal{Y})$ is
$\wNC$-tight at every point of $X$.
\end{lemma}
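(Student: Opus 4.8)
The plan is to fix $x\in X$ and $A\con Z(X,\mathcal{Y})$ with $x\in\mathrm{CL}_Z A$, and to produce a countable nowhere dense $B\con A$ with $x\in\mathrm{CL}_Z B$. Two preliminary observations do most of the bookkeeping. First, since the sets $Y_{x'}$ are disjoint from $X$, a basic neighborhood $W(x,U)$ of a point of $X$ meets $X$ exactly in $U$; hence the subspace topology $Z(X,\mathcal{Y})$ induces on $X$ is the original one, so for $x\in X$ we have $x\in\mathrm{CL}_Z C$ iff $x\in\mathrm{CL}_X C$ whenever $C\con X$. Second, as $X$ is crowded, $X$ is nowhere dense in $Z(X,\mathcal{Y})$ (as noted before the lemma), so \emph{every} subset of $X$ is nowhere dense in $Z(X,\mathcal{Y})$. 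Now split $A=(A\cap X)\cup(A\cap\bigcup\mathcal{Y})$; then $\mathrm{CL}_Z A=\mathrm{CL}_Z(A\cap X)\cup\mathrm{CL}_Z(A\cap\bigcup\mathcal{Y})$, so $x$ lies in at least one of the two closures, which gives two cases.

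In the first case, $x\in\mathrm{CL}_Z(A\cap X)=\mathrm{CL}_X(A\cap X)$, so countable tightness of $X$ yields a countable $B\con A\cap X$ with $x\in\mathrm{CL}_X B\con\mathrm{CL}_Z B$, and $B$ is nowhere dense in $Z(X,\mathcal{Y})$ because it is contained in $X$. This $B$ works. In the second case, $x\in\mathrm{CL}_Z(A\cap\bigcup\mathcal{Y})$; since $Y_x$ is clopen in $Z(X,\mathcal{Y})$ and $x\notin Y_x$, the closure of $A\cap Y_x$ cannot contain $x$, so we may replace $A$ by $A\cap\bigcup\{Y_{x'}:x'\neq x\}$. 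Write $A_{x'}=A\cap Y_{x'}$ and $S=\{x'\in X:A_{x'}\neq\emptyset\}$; unwinding the definition of $W(x,U)$ gives $W(x,U)\cap A=\bigcup\{A_{x'}:x'\in U\cap S\}$, so $x\in\mathrm{CL}_Z A$ is equivalent to $x\in\mathrm{CL}_X S$. Countable tightness of $X$ now furnishes a countable $T\con S$ with $x\in\mathrm{CL}_X T$. Choosing $b_{x'}\in A_{x'}$ for each $x'\in T$ and setting $B=\{b_{x'}:x'\in T\}$, the same neighborhood computation shows $W(x,U)\cap B=\{b_{x'}:x'\in U\cap T\}$ is nonempty for every neighborhood $U$ of $x$ in $X$, so $x\in\mathrm{CL}_Z B$, and $B$ is countable.

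The hard part is verifying that $B$ is nowhere dense in $Z(X,\mathcal{Y})$: a naive ``countable union of nowhere dense singletons'' estimate is not available, so one must exploit the clopenness of the $Y_{x'}$. Since each $Y_{x'}$ is clopen and Tychonoff, hence $T_1$, we get $\mathrm{CL}_Z B\cap Y_{x'}=\mathrm{CL}_{Y_{x'}}(B\cap Y_{x'})$, which equals $\{b_{x'}\}$ when $x'\in T$ and $\emptyset$ otherwise; as $B\cap X=\emptyset$, this yields $\mathrm{CL}_Z B\con B\cup X$. If some nonempty open $V\con Z(X,\mathcal{Y})$ were contained in $\mathrm{CL}_Z B$, then, $\bigcup\mathcal{Y}$ being dense, $V\cap\bigcup\mathcal{Y}$ would be a nonempty open set contained in $B$; picking $x'$ with $V\cap Y_{x'}\neq\emptyset$ would make $\{b_{x'}\}$ open in $Y_{x'}$, contradicting that $Y_{x'}$ is crowded. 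Hence $\mathrm{CL}_Z B$ has empty interior, $B$ is nowhere dense, and $B$ is the required countable nowhere dense subset of $A$ with $x$ in its closure. This shows $Z(X,\mathcal{Y})$ is $\wNC$-tight at $x$, and since $x\in X$ was arbitrary, at every point of $X$.
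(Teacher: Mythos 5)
Your proof is correct and follows essentially the same route as the paper: split off the case $x\in\CL{A\cap X}$, reduce to $A\con\bigcup\mathcal{Y}$, use countable tightness of $X$ on the set of indices $x'$ with $A\cap Y_{x'}\neq\emptyset$ (the paper phrases this via the continuous projection $\pi:\bigcup\mathcal{Y}\to X$), and pick one point of $A$ from each selected $Y_{x'}$, verifying that this countable selector is nowhere dense because each $Y_{x'}$ is crowded. Your write-up just makes explicit the neighborhood computations and the nowhere-density check that the paper leaves to the reader.
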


\begin{proof}
Fix a point $p$ in $X$, and let $A$ be any subset of $Z(X,\mathcal{Y})$ such that $p\in \CL{A}$. We may assume without loss of generality that $p\not\in A$. If $p\in \CL{A\cap X}$ then we are done since $X$ is a countably tight and nowhere dense in $Z$. Hence we may assume without loss of generality that $A\con Y = \bigcup_{x\in X} Y_x$.

Consider the projection map $\pi : Y \to X$ defined by $\pi(y) = x$ for $y \in Y_x$. It is clear from the definitions that
$\pi$ is continuous, hence we have $p \in \overline{\pi[A]}$.
Thus there is a countable subset $S \con \pi[A]$ with $p \in \overline{S}$. Clearly, we may assume that $p \notin S$.
For every $x\in S$ we may fix an element $b(x) \in A\cap Y_x$. The set $B = \{b(x) : x \in S\}$ is countable and nowhere dense in $Z(X,\mathcal{Y})$. To see this, simply observe that every $Y_x$ is crowded.

We claim that $p$ is in the closure of $B$. To prove this, consider any basic open neighborhood $W(p,U)$ of $p$ in $Z(X,\mathcal{Y})$, where $U$ is an open neighborhood of $p$ in $X$. But then $S \cap U \ne \emptyset$,
and for every $x \in S \cap U$  we clearly have that $b(x) \in W(p,U) \cap B$.
\end{proof}

\begin{ex}
{An \wNC-tight space which is not \wDC-tight.}
\end{ex}
We again consider the space of rational numbers $\rationals$, and fix a remote point $p$ of $\rationals$. Put $X= \rationals \cup \{p\}$, and in the above construction consider a pairwise disjoint collection of spaces $\mathcal{Y} = \{Y_x : x\in X\}$such that $X\cap \bigcup Y = \emptyset$ and each $Y_x$ is a topological copy of $\rationals$. We claim that the space
$Z(X,\mathcal{Y})$ with the topology that we just discussed is the space we are looking for.
That $Z(X,\mathcal{Y})$ is not $\wDC$-tight is clear since $p$, being remote, is not in the closure of any countable discrete subset of $\rationals$. Observe that $Z(X,\mathcal{Y}) \setminus \{p\}$ is homeomorphic to $\rationals$, being a countable crowded second-countable space. Hence to prove that $Z(X,\mathcal{Y})$ is $\wNC$-tight, we only need to check this at the point $p$. But this is a straight forward consequence of Lemma~\ref{derdelemma}.

Since countable discrete spaces are second countable, the following example is actually a strengthening of
the previous one.

\begin{ex}
{An \wNC-tight space which is not \CtC-tight.}
\end{ex}
Let $X$ be the countable nodec space from Example \ref{nodec}.
To apply the above duplicate construction, consider a pairwise disjoint collection $\mathcal{Y} = \{Y_x : x\in X\}$ of spaces
such that $X\cap \bigcup_{x\in X} Y_x = \emptyset$ and each $Y_x$ is a topological copy of $\rationals$. We claim that
$Z(X,\mathcal{Y})$ is the space which we are looking for. That $Z(X,\mathcal{Y})$ is \wNC-tight is a consequence of Lemma~\ref{derdelemma} and the fact that its subspaces $Y_x$ for $x\in X$ are clopen and \wNC-tight.

Now pick an arbitrary $x\in X$. Then we know form Example \ref{nodec} that $X$ is not \CtC-tight at $x$.
But $X$ is a subspace of $Z(X,\mathcal{Y})$ and so this fact is clearly preserved in $Z(X,\mathcal{Y})$.

\par\medskip

Although the following example does not demonstrate that one of our implications in the diagram cannot be reversed, it solves a natural problem and is therefore included.

\begin{ex}
{A space which is both \CtC-tight and \wNC-tight but not \wDC-tight.}
\end{ex}
Let $A$ be a closed and nowhere dense copy of $\rationals$ in $\rationals$.
(Take, for instance, $\mathbb{Q} \times \{0\}$ in $\mathbb{Q}^2$.)
Observe that the closure of $A$ in $\beta\rationals$ is $\beta A$. Let $p$ be a remote point of $A$ which we think of as a point of $\beta\rationals$, and let $X = \rationals \cup \{p\}$
taken as a subspace of $\beta \mathbb{Q}$.
We claim that $X$ is the space we are looking for. First, it is trivial that $X$ is not \wDC-tight because its subspace $A \cup \{p\}$ is homeomorphic to
the space constructed in Example \ref{not_wD} which is not \wDC-tight. It is also obvious that $X$ is \CtC-tight. To see that that $X$ is also \wNC-tight,
we only have to check this property at $p$.

So, assume that $B$ is a subset of $\rationals$ such that $p\in \mathrm{CL}_{\beta\rationals} B$. If $p$ is in the closure of $A\cap B$, then we are done. If not, then
$p \in \mathrm{CL}_{\beta\rationals} (B \setminus A)$, hence we may assume without loss of generality that $A\cap B = \emptyset$.
We let $\CL{B}$ denote the closure of $B$ in $\rationals$.  Then for every closed neighborhood $U$ of $p$ in $\beta\mathbb{Q}$ we have
$p\in \mathrm{CL}_{\beta\rationals}(U \cap A) \cap  \mathrm{CL}_{\beta\rationals} B$, consequently $U \cap A \cap \CL{B} \ne \emptyset$ because
disjoint closed sets in $\mathbb{Q}$ have disjoint closures in $\beta\mathbb{Q}$. But this means that
$p \in \mathrm{CL}_{\beta\rationals} (A\cap \CL{B})$. Clearly, $B \con \mathbb{Q} \setminus A$ implies that
$A\cap \CL{B}$ is a nowhere dense subset of $\CL{B}$. It is standard to show then that there is a (countable) discrete subset $D$ of $B$ such that $A\cap \CL{B} \con \CL{D}$.
But then $p\in \mathrm{CL}_{\beta\rationals} D$ and $D \con B$ is, of course, countable and nowhere dense in $X$.

\medskip

\begin{ex}
{An \hLC-tight space which is not \sCC-tight.}
\end{ex}
Let $X$ be an $L$-space which is left-separated in type $\omega_1$ and has weight $\omega_1$. Clearly,
the existence of any $L$-space (see e.g. Moore~\cite{Moore06}) implies the existence of such a space $X$.
We can also assume that $X$ is nowhere separable, i.e. countable subsets in $X$ are nowhere dense.
Indeed, just take a maximal pairwise disjoint family consisting of separable open sets and throw away its union.
Of course, then $X$ cannot be compact by not being scattered, using again \cite{GerlitsJuhasz78}, hence $X$ is not pseudocompact, either.
So, $X$ has a remote point $p$ by the above mentioned result of Dow~\cite{Dow84}. Let us now
put $Y= X\cup \{p\}$, considered as the subspace of $\beta X$. Then $Y$ is clearly \hLC-tight and not countably tight. Hence $Y$ is not \sCC-tight either by Lemma~\ref{eerstelemma}.

\medskip

\begin{ex}
{A ccc-tight space which is not Lindel\"of-tight.}
\end{ex}

For any space $X$ we let $F[X]$ denote its Pixley-Roy hyperspace. We recall that a subset $X$ of $\reals$ is called \emph{$\omega_1$-dense} if every nonempty interval in $\reals $ intersects it in a set of size $\omega_1$.
In particular, then $|X| = \omega_1$.

\begin{lemma}\label{tweedelemma}
If $X$ is an $\omega_1$-dense subspace of $\reals$, then $F[X]$ is ccc and every Lindel\"of subspace of $F[X]$ is countable and nowhere dense.
\end{lemma}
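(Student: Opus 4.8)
The plan is to establish the two assertions separately. Recall that the points of $F[X]$ are the nonempty finite subsets of $X$ and that the sets $[F,U]=\{G\in F[X]:F\con G\con U\}$, for $F$ finite and $U$ open in $X$ with $F\con U$, form a base of clopen sets; in particular $F[X]$ is zero-dimensional, hence regular.

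For the \emph{ccc} property I would argue directly, using only that $X$ is second countable. Suppose toward a contradiction that $\{[F_\alpha,U_\alpha]:\alpha<\omega_1\}$ is a pairwise disjoint family of nonempty basic open sets, and fix a countable base $\mathcal B$ of $X$. For each $\alpha$ choose finitely many members of $\mathcal B$ whose union $W_\alpha$ satisfies $F_\alpha\con W_\alpha\con U_\alpha$; this is possible since $F_\alpha$ is finite and $F_\alpha\con U_\alpha$. There are only countably many finite unions of members of $\mathcal B$, so some fixed set $W$ equals $W_\alpha$ for uncountably many $\alpha$. For any two such indices $\alpha\neq\beta$ the finite nonempty set $F_\alpha\cup F_\beta$ is contained in $W$, hence in $U_\alpha\cap U_\beta$, so it belongs to $[F_\alpha,U_\alpha]\cap[F_\beta,U_\beta]$ --- a contradiction.

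Now let $Y\con F[X]$ be Lindel\"of. I would first reduce the ``nowhere dense'' clause to countability: if $Y$ is countable, then $C=\bigcup Y$ is a countable subset of $X$ and $Y\con F[C]$, and $F[C]$ is nowhere dense in $F[X]$, since for any nonempty $[F,U]$ the open set $U$ is uncountable (this is where $\omega_1$-density enters), so choosing $y\in U\setminus C$ yields a nonempty $[F\cup\{y\},U]\con[F,U]$ all of whose members contain $y\notin C$ and therefore lie outside $F[C]$. To prove that $Y$ is countable I would use the classical fact that $F[X]$ is a Moore space when $X$ is metrizable: it is zero-dimensional, and writing $B(F,\varepsilon)$ for the $\varepsilon$-neighborhood of $F$ in $X$, the open covers $\mathcal G_n=\{[F,B(F,1/n)]:F\in F[X]\}$ form a development, because for each $G$ and all sufficiently large $n$ the star of $G$ in $\mathcal G_n$ equals $[G,B(G,1/n)]$ --- here one uses that a proper subset $F\subsetneq G$ fails to satisfy $G\con B(F,1/n)$ once $1/n$ is smaller than the least distance between two points of $G$ --- and the sets $[G,B(G,1/n)]$ form a neighborhood base at $G$. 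Since $F[X]$ is Moore, so is its subspace $Y$; being regular and Lindel\"of, $Y$ is paracompact, hence metrizable, hence (being Lindel\"of) separable. Picking a countable dense $D\con Y$ and putting $C_0=\bigcup D$, which is countable, the set $F[C_0]$ is closed in $F[X]$ --- any finite $G\not\con C_0$ has the neighborhood $[G,X]$ disjoint from $F[C_0]$ --- so $Y$ lies in the closure of $D$ in $F[X]$, which is contained in the countable set $F[C_0]$. Hence $Y$ is countable, and by the first reduction also nowhere dense.

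The only ingredient that is not completely routine is that $F[X]$ is a Moore space for metrizable $X$; although this is standard (see van Douwen's survey on Pixley--Roy hyperspaces), the point to be careful about is the verification that the stars in the development above really shrink to single points, which hinges on the remark about the least gap between the points of $G$. Everything else --- the pigeonhole argument for ccc, the passage from ``separable'' to ``countable'' via the closedness of $F[C_0]$, and the ``nowhere dense'' clause --- is elementary.
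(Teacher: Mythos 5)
Your proposal is correct, but it reaches the conclusion by a genuinely different route than the paper. For the ccc part the paper simply cites Pixley--Roy, while you give the standard direct pigeonhole argument over a countable base of $X$; that argument is fine. For the main clause, the paper argues combinatorially: given an uncountable $\mathcal{A}\con F[X]$, it passes to an uncountable $\mathcal{B}$ whose members all have the same size $n$ and whose consecutive points are $\varepsilon$-separated, and checks directly (splitting into the cases $|H|\ge n$ and $|H|<n$) that $\mathcal{B}$ is closed and discrete in $F[X]$; hence no uncountable subspace is Lindel\"of. You instead invoke metrization theory: $F[X]$ is a Moore space for metrizable $X$ (your development $\mathcal{G}_n=\{[F,B(F,1/n)]:F\in F[X]\}$ and the ``least gap of $G$'' computation are correct), a Lindel\"of subspace $Y$ is then regular Lindel\"of, hence paracompact, hence metrizable by Bing, hence separable, and the closedness of $F[C_0]$ for $C_0=\bigcup D$ countable forces $Y\con F[C_0]$ to be countable. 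Both treatments of nowhere density of countable sets are the same and this is the only place $\omega_1$-density is used, just as the paper notes. What each buys: the paper's argument is elementary, self-contained, and proves slightly more (every uncountable subset of $F[X]$ contains an uncountable subset closed and discrete in the whole space), whereas yours leans on classical but heavier machinery; you could lighten it by observing that a Lindel\"of developable space is already second countable (take countable subcovers of the $\mathcal{G}_n$), which bypasses paracompactness and Bing's theorem entirely.
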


\begin{proof}
That $F[X]$ is ccc is well-known and implicit in Pixley and Roy~\cite{PixleyRoy69}.

Suppose $\mathcal{A}\con F[X]$ is uncountable. We claim that $\mathcal{A}$ has an uncountable subset that is
closed and discrete in $F[X]$.
We may assume that, for some natural number $n$, all members of $\mathcal{A}$ have cardinality $n$. Write each $A\in \mathcal{A}$ as $A = \{x_1^A,\dots, x_n^A\}$, where $x_1^A < \cdots < x^A_n$. Then there is $\varepsilon > 0$ such that for an uncountable subcollection $\mathcal{B}$ of $\mathcal{A}$ we have that   $|x_i^A-x_{i+1}^A|\ge \varepsilon$ for all $1\le i \le n{-}1$ whenever $A\in \mathcal{B}$. 

But $\mathcal{B}$ is closed and discrete in $F[X]$. Indeed, consider any $H \in F[X]$.
If $|H| \ge n$ then even $[H,X] \cap \mathcal{A}$ has at most one element. (Recall that
a basic neighborhood of a point $H$ in $F[X]$ has the form
$$
    [H,U]=\{G\in F[X] : H\con G\con U\},
$$
where $U$ is any open neighborhood of $H$ in $X$.)
If, on the other hand, $|H| < n$ then fix for
each $x \in H$ an open ball $U_x$ about $x$ of diameter $< \varepsilon$ and put $U = \bigcup_{x \in H} U_x$.
We claim that then $[H,U] \cap \mathcal{B} = \emptyset$. Indeed,  $A \in \mathcal{B} \cap [H,U]$ would imply
$|A \cap U_x| \ge 2$ for some $x \in H$, hence we would have distinct $i,j \le n$ with $x_i^A, x_j^A \in U_x$.
But this is clearly impossible. 

It obviously follows then that no uncountable subspace of $F[X]$ is Lindel\"of. (We have not used so far the
assumption that $X$ is  $\omega_1$-dense.)

Now let $\mathcal{A}\con F[X]$ be countable and fix any basic open set $[H,U]$.  Since $\bigcup \mathcal{A}$ is countable and 
$U$ is uncountable, we may pick an element $p\in U\setminus \bigcup \mathcal{A}$. Then $[H \cup \{p\},U]$ is a nonempty open subset of $[H,U]$ that misses $\mathcal{A}$, hence $\mathcal{A}$ is nowhere dense in $F[X]$.

\end{proof}

Obviously,  the weight of $F[X]$ is $\omega_1$. Also, it is  not pseudocompact. Indeed, it is of first category being
crowded and  $\sigma$-discrete, while pseudocompact spaces are Baire, see e.g.  Engelking \cite{E}, 3.10.23 (ii). But then
$F[X]$ has a remote point, say $p$. 
We claim that $Z = F[X] \cup \{p\}$, as a subspace of $\beta F[X]$, is the space we are looking for. That $Z$ is not \Lt\ is an immediate consequence of Lemma~\ref{tweedelemma}. 

To prove that $Z$ is \ccct, we first remark that because $F[X]$ is first countable, this need only be checked at $p$. So assume that $A \con F[X]$ has $p$ in its closure. Consider the closure $B$ of $A$ in $F[X]$ and let $C$ denote its interior. Then $D=B\setminus C$ is nowhere dense in $F[X]$, hence $p$ is not in the closure of $D$.
Consequently, $p$ is in the closure of $A\cap C$ which is ccc, being dense in the open set $C \con F[X]$. Hence we are done.

\subsection*{Discussion}
As we mentioned in \S\ref{introduction}, we were unable to solve the following two basic problems on Lindel\"of-tightness that are mentioned in the diagram:

\begin{question}\label{OpenProblems}${}$
\begin{enumerate}
\item[(A)] Is every \Lt\ space \hLt?
\item[(B)] Is every \Lt\ space \ccct?
\end{enumerate}
\end{question}

Let us repeat what we said earlier: if the answer to Question~\ref{OpenProblems}(A) is in the affirmative, then so is the answer to the \arhang-Stavrova question whether $\sigma$-compact tightness implies countable tightness.

The following problem is interesting in its own right and could be easier to tackle than Question~\ref{OpenProblems}.

\begin{question}
Is every space that is both \Lt\ and \ccct, \hLt?
\end{question}


\def\cprime{$'$}
\makeatletter \renewcommand{\@biblabel}[1]{\hfill[#1]}\makeatother

\end{document}

\def\cprime{$'$}
\makeatletter \renewcommand{\@biblabel}[1]{\hfill[#1]}\makeatother